\newtheorem{theorem}{Theorem}[section]
\newtheorem*{theorem*}{Theorem}
\newtheorem{lemma}{Lemma}[section]
\newtheorem{corollary}[theorem]{Corollary}
\newtheorem{definition}[theorem]{Definition}
\newtheorem{remark}[theorem]{Remark}
\def\Ric{\text{Ric}}
\def\th{\theta}
\def\p{\partial}
\def\R{\mathbb{R}}
\def\k{\kappa}
\def\Ric{\operatorname{Ric}}
\numberwithin{equation}{section}
\begin{document}

\title[Parabolic equations on manifolds]{Schwarz symmetrizations in parabolic equations on complete manifolds}

\author{Haiqing Cheng}
\address{School of Mathematical Sciences, Soochow University, Suzhou, 215006, China}
\email{chq4523@163.com}

\author{Tengfei Ma}
\address{School of Mathematical Sciences, Soochow University, Suzhou, 215006, China}
\email{1829401184@stu.suda.edu.cn}

\author{Kui Wang} \thanks{The research of the third author is supported by NSFC No.11601359}
\address{School of Mathematical Sciences, Soochow University, Suzhou, 215006, China}
\email{kuiwang@suda.edu.cn}

\subjclass[2020]{53C21, 53C42}

\keywords{Bandle's comparison, Parabolic equations, Manifolds}

\begin{abstract}
In this article, we prove a sharp  estimate for the solutions to parabolic equations on manifolds.  Precisely, using symmetrization techniques and isoperimetric inequalities on Riemannian manifold, we obtain a Bandle's comparison  on complete noncompact manifolds with nonnegative Ricci curvature and compact manifolds with positive Ricci curvature respectively.  Our results generalize Bandle's result \cite{Ba76}  to Riemannian setting, and Talenti's comparison for elliptic equation on manifolds by  Colladay-Langford-McDonald \cite{CLM18} and  Chen-Li \cite{CL21} to parabolic equations.
\end{abstract}

\maketitle

\section{Introduction}
Let $\Omega\subset\R^n$ be a bounded domain with smooth boundary, $\Omega^\sharp\subset\R^n$ be a round ball  with the same volume as $\Omega$, $f(x)$ and $g(x)$ be  nonnegative functions on $\Omega$, and $f^\sharp$, $g^\sharp$ be the Schwarz rearrangement of $f$ and $g$, see Definition \ref{sch} below. Let $u(x)$ and $v(x)$ be solutions to
\begin{align*}
    \begin{cases}
    u_t(x,t)-\Delta u(x,t)=f(x), & (x,t)\in \Omega\times(0,\infty),\\
    u(x, 0)= g(x), & x\in \Omega,\\
    u(x,t)=0, & (x,t)\in \p \Omega \times (0,\infty),
    \end{cases}
\end{align*}
and
\begin{align*}
    \begin{cases}
    v_t(x,t)-\Delta v(x,t)=f^\sharp(x), & (x,t)\in \Omega^\sharp\times(0,\infty),\\
    v(x, 0)= g^\sharp(x), & x\in \Omega^\sharp,\\
    v(x,t)=0, & (x,t)\in \p \Omega^\sharp \times (0,\infty).
    \end{cases}
\end{align*}
Then  for all $(a,t) \in(0,|\Omega|)\times[0, +\infty)$, it holds
\begin{align}\label{1-1}
  \int_0^a u^*(s,t)\, ds\le \int_0^a v^*(s,t)\,ds,
\end{align}
where $u^*$ is the decreasing rearrangement of $u$, see \eqref{def-dr} below. Inequality \eqref{1-1} was proved by Bandle \cite{Ba76} for strong solutions,  by V\'azquez \cite{Va82} and Mossino-Rakotoson \cite{MR86} for weak solutions. The elliptic version of \eqref{1-1} is known as Talenti's comparison \cite{Ta76, Ta79}. The proof of \eqref{1-1} is mainly based on the isoperimetric inequality and symmetrization techniques. Talent and Bandle's comparisons play an important role in mathematical studies, since there provide sharp estimates for solutions to elliptic and parabolic equations,  Faber-Krahn type inequality for  eigenvalues \cite{Ta76,ANT21}, and the bounds on the exit times of Brownian motion\cite{BS01,CLM18}.
Bandle's comparison \eqref{1-1} was generalized to nonlinear parabolic equations with Dirichlet boundary condition (see for instance \cite{ATL90,AVV10} and references therein),  and to parabolic equations with Neumann boundary condition  \cite{FM05} as well. We also refer the reader to the excellent books \cite{Ka85, Ke06} for related topics.

Recently, Talenti's comparison for Dirichlet boundary was generalized to the compact manifolds with positive Ricci curvature by Colladay, Langford and McDonald \cite{CLM18}, and to complete noncompact manifold with nonnegative Ricci curvature and positive asymptotic volume ratio by  Chen and Li \cite{CL21}. Also Talenti's comparison for Robin boundary was proved in both Euclidean space \cite{ANT21, ACNT21} and complete manifolds \cite{CLY21,CMW21}. To the best of our knowledge, Bandle's comparison, however, did not  receive  as much attention as Talenti's comparison on manifolds. The purpose of the present paper is to study the  Bandle's comparison for solutions to parabolic equation on manifolds.  In particular, we will establish a  sharp comparison between the solutions to \eqref{1-3} and \eqref{1-4} on  complete and noncompact manifolds with nonnegative Ricci curvature and positive asymptotic volume ratio, and compact manifolds with positive Ricci curvature, see Theorem \ref{thm} below.

Let $(M, g)$ be a  complete Riemannian manifold of dimension $n$, which is either compact with $\Ric\ge (n-1)\k$ for $\k>0$ or noncompact with nonnegative Ricci curvature and positive positive asymptotic volume ratio. Denote by
\begin{align}\label{1-2}
    \theta:=
    \begin{cases}
    \lim_{r\to \infty}\frac{|B_p(r)|}{\omega_n r^n}, & \quad \k=0,\\
    \frac{|M|}{|M_\k|}, &\quad \k>0,
    \end{cases}
\end{align}
where  $B_p(r)$ is the geodesic ball centered at $p$ with radius $r$ in $M$, $M_\k$ is the $n$ dimensional space form of constant sectional curvature $\k$, $\omega_n$ is the volume of the unit ball in $\R^n$, and $|M|$ denotes the volume of $M$. It follows from   the volume comparison that $\th\le 1$.
Let $\Omega\subset M$ be a bounded domain with smooth boundary, $f(x)$ and $g(x)$ be smooth and nonnegative function not identically zero on $\Omega$, we consider the following Cauchy problem
\begin{align}\label{1-3}
    \begin{cases}
    u_t(x,t)-\Delta u(x,t)= f(x), &\quad (x,t)\in \Omega\times(0,\infty) ,\\
    u(x,t)= 0, & \quad (x,t)\in \p \Omega\times(0,\infty),\\
	u(x,0)=g(x),& \quad x\in \Omega.
    \end{cases}
\end{align}
 Let  $\Omega^{\sharp}$ be a geodesic ball in $M_\k$ satisfying $\theta |\Omega^\sharp|=|\Omega|$,
 $f^\sharp$ and $g^\sharp$ defined on $\Omega^\sharp$ are the Schwarz rearrangement of $f$ and $g$ respectively. Denote by  $v(x)$  the solution to the following Schwarz rearrangement system of \eqref{1-3}
 \begin{align}\label{1-4}
    \begin{cases}
    v_t (x,t)-\Delta v(x,t)= f^\sharp(x), &\quad (x,t)\in \Omega^\sharp\times(0,\infty),\\
    v(x,t)= 0, & \quad (x,t)\in \p \Omega^\sharp\times(0,\infty),\\
	v(x,0)=g^\sharp(x),& \quad x\in \Omega^\sharp.
    \end{cases}
\end{align}
The main result of this paper is the following theorem, which gives a sharp Bandle's comparison on Riemannian manifolds.
\begin{theorem}\label{thm}
 Let $u(x)$ and $v(x)$ be the solutions to \eqref{1-3} and \eqref{1-4} respectively. Then
\begin{align}\label{1-5}
U(a, t)\le V(a, t),\qquad (a,t) \in (0,|\Omega^\sharp|]\times (0,\infty).
\end{align}
Where
\begin{align}\label{1-6}
  U(a,t)=\frac{1}{\theta }\int_{0}^{\theta a}u^*(s,t)ds,\quad V(a,t)= \int_{0}^{a}v^*(s,t)ds.
\end{align}
Moreover if inequality \eqref{1-5} holds as an equality for all   $(s,t)\in (0,|\Omega^\sharp|]\times (0,\infty)$, then $M$ is isometric to  $M_\k$,
$\Omega$ is isometric to  $\Omega^\sharp$, and $u(x,t)=v(x,t)$ on $\Omega^\sharp\times(0,\infty)$.
\end{theorem}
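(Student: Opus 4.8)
### Proof Proposal

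The plan is to follow the classical symmetrization scheme of Bandle, adapted to the Riemannian setting by means of the relative isoperimetric/volume comparison available under the curvature hypotheses. Fix $t>0$ and set, for a regular value $c$ of $u(\cdot,t)$, the level sets $\Omega(c)=\{x\in\Omega: u(x,t)>c\}$ and the distribution function $\mu(c,t)=|\Omega(c)|$, so that $u^*(\cdot,t)$ is the generalized inverse of $\mu$. First I would differentiate in time the quantity $\int_{\Omega(c)}u\,dV$ (i.e.\ $\int_0^{\mu(c,t)}u^*(s,t)\,ds$ up to a shift by $c\mu$), integrate the equation $u_t-\Delta u=f$ over $\Omega(c)$, and apply the divergence theorem to get $\frac{d}{dt}\int_{\Omega(c)}(u-c)\,dV = -\int_{\p\Omega(c)}|\nabla u|\,d\sigma + \int_{\Omega(c)}f\,dV$. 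Then Cauchy--Schwarz on the boundary integral combined with the coarea formula, $-\mu_c(c,t)=\int_{\p\Omega(c)}|\nabla u|^{-1}d\sigma$, yields $\bigl(\Per(\Omega(c))\bigr)^2\le \bigl(-\tfrac{d}{dt}\int_{\Omega(c)}(u-c)\,dV + \int_{\Omega(c)}f\,dV\bigr)\cdot\bigl(-\mu_c\bigr)$.

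Next I would insert the isoperimetric inequality. Under $\Ric\ge(n-1)\k$ (with $\k>0$ and $M$ compact, or $\k=0$, $M$ noncompact with asymptotic volume ratio $\th$), the sharp isoperimetric inequality forces $\Per(\Omega(c))\ge \th^{1/n}\,\I_\k\!\bigl(\mu(c,t)/\th\bigr)$, where $\I_\k$ is the isoperimetric profile of the model space $M_\k$; this is exactly the ingredient used in \cite{CLM18, CL21}. Combining with the previous step and rewriting everything through the substitution $a=\mu(c,t)/\th$ and the definition \eqref{1-6} of $U$, one obtains a differential inequality for $U(a,t)$: schematically $U_t(a,t) \le a$-derivative terms governed by the model isoperimetric profile plus $\int_0^{a}f^\sharp(s)\,ds$. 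Since the rearranged system \eqref{1-4} lives on the model $\Omega^\sharp\subset M_\k$, the same computation run as an \emph{equality} (all level sets of $v(\cdot,t)$ are concentric geodesic balls and $f^\sharp$ is radial) shows that $V(a,t)$ satisfies the corresponding differential \emph{equation} with the same right-hand-side structure. Here the key pointwise comparison of the forcing terms is $\int_0^{\th a}f^*(s)\,ds\le \th\int_0^{a}(f^\sharp)^*(s)\,ds$, which is immediate from the definition of the Schwarz rearrangement and $\th|\Omega^\sharp|=|\Omega|$; likewise for $g$, giving the initial condition $U(a,0)\le V(a,0)$.

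Finally I would close the argument by a parabolic comparison principle for the resulting (degenerate, nonlocal in $a$) differential inequality satisfied by $W:=V-U$: one has $W(a,0)\ge0$, $W(0,t)=0$, $W(|\Omega^\sharp|,t)\ge0$ (total-mass balance, using $\th\le1$), and $W_t \ge \text{(second-order elliptic operator in }a)\,W$ in the viscosity/distributional sense. Applying the maximum principle gives $W\ge0$, i.e.\ \eqref{1-5}. For the rigidity statement, equality in \eqref{1-5} for all $(a,t)$ forces equality in the isoperimetric inequality at every level — hence equality in the volume comparison, so $\th=1$ and $M$ is isometric to $M_\k$ — and forces equality in the Cauchy--Schwarz step, so $|\nabla u|$ is constant on each level set and the level sets are geodesic balls; unwinding this identifies $\Omega$ with $\Omega^\sharp$ and $u$ with $v$.

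I expect the main obstacle to be the rigorous justification of the differentiation-under-the-integral and the coarea manipulations at the level of weak solutions — in particular handling non-regular values of $u(\cdot,t)$, the possible lack of smoothness of $\mu(c,t)$ jointly in $(c,t)$, and making the differential inequality for $U$ precise enough (in the a.e.\ or distributional sense) to feed into a rigorous maximum principle. A secondary delicate point is the equality-discussion in the noncompact $\k=0$ case, where one must convert equality in the asymptotic volume ratio into a genuine isometry with $\R^n$; this uses the rigidity case of the volume comparison theorem and the fact that equality propagates to all scales.
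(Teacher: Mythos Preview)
Your overall scheme is the same as the paper's: coarea plus Cauchy--Schwarz on level sets, the sharp isoperimetric inequality with constant $\theta$, a parabolic differential inequality for $U$ versus an equality for $V$, and the maximum principle for $W=V-U$. Two points need correction.

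First and most importantly, your boundary condition at $a=|\Omega^\sharp|$ is not available. You claim $W(|\Omega^\sharp|,t)\ge 0$ from ``total-mass balance, using $\theta\le 1$'', but $U(|\Omega^\sharp|,t)=\theta^{-1}\int_\Omega u$ and $V(|\Omega^\sharp|,t)=\int_{\Omega^\sharp}v$, and there is no a priori relation between these totals---with Dirichlet boundary data mass leaks through $\partial\Omega$ and $\partial\Omega^\sharp$, so no balance law applies; indeed $W(|\Omega^\sharp|,t)\ge0$ is precisely the $p=1$ case of the corollary you are trying to prove. The paper closes the argument with a \emph{Neumann} condition instead: since $u$ and $v$ vanish on their boundaries one has $u^*(|\Omega|,t)=v^*(|\Omega^\sharp|,t)=0$, hence $U'(|\Omega^\sharp|,t)=V'(|\Omega^\sharp|,t)=0$, i.e.\ $W'(|\Omega^\sharp|,t)=0$. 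With this Neumann condition (together with $W(0,t)=0$ and $W(\cdot,0)=0$, the latter being an equality by the very definition of $g^\sharp$, not merely an inequality) the maximum principle applies.

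Second, a minor slip: with the paper's normalization $\theta|\Omega^\sharp|=|\Omega|$ and $\Phi(s)=|\partial B_s|$ the isoperimetric profile of $M_\k$, the inequality reads $|\partial\Omega(c)|\ge \theta\,\Phi(\mu/\theta)$, not $\theta^{1/n}\,\I_\k(\mu/\theta)$. (For $\k=0$ the two formulations coincide after expanding $\Phi$, but for $\k>0$ the L\'evy--Gromov statement carries the factor $\theta$, and in any case it is this factor that produces the clean inequality $U_t-\Phi^2(a)U''\le \int_0^a(f^\sharp)^*$.)
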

  \begin{remark}
In this paper, we mainly focus on Bandle's comparison \eqref{1-1} on manifolds and we then set up Theorem \ref{thm} in  smooth case. In fact, Theorem \ref{thm}  remains valid for weak solutions to \eqref{1-3}, see \cite{MR86} and \cite{ATL90}.
\end{remark}
The following corollary is a simple  consequence of \eqref{1-5}, which gives a sharp $L^p$ norm  estimate of $u$ by $v$  on space variables for all $t$.
\begin{corollary}\label{Cor}
 Let $u(x)$ and $v(x)$ be the solutions to \eqref{1-3} and \eqref{1-4} respectively. Then  we have
 \begin{align}\label{1-7}
    \Big (\frac 1 \th \int_\Omega u^p(x,t) \,dx\Big)^{1/p}\le \Big (\int_{\Omega^\sharp} v^p(x,t) \,dx\Big)^{1/p}
 \end{align}
for all $t>0$ and $p\in [1,\infty]$.
\end{corollary}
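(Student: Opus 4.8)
The plan is to deduce Corollary \ref{Cor} directly from Theorem \ref{thm} by an integration-by-parts argument in the decreasing-rearrangement variable. First I would recall the layer-cake/equimeasurability facts: since $u(\cdot,t)$ and $u^*(\cdot,t)$ are equimeasurable, $\int_\Omega u^p(x,t)\,dx = \int_0^{|\Omega|} (u^*(s,t))^p\,ds$, and likewise $\int_{\Omega^\sharp} v^p(x,t)\,dx = \int_0^{|\Omega^\sharp|} (v^*(s,t))^p\,ds$; also $|\Omega| = \theta|\Omega^\sharp|$. With the substitution $s = \theta\sigma$ the left-hand side becomes $\frac1\theta\int_0^{|\Omega|}(u^*(s,t))^p\,ds = \int_0^{|\Omega^\sharp|}(u^*(\theta\sigma,t))^p\,d\sigma$, so after raising to the power $1/p$ it suffices to prove the pointwise-in-$\sigma$ comparison of the functions $\sigma\mapsto u^*(\theta\sigma,t)$ and $\sigma\mapsto v^*(\sigma,t)$ in the sense of their $L^p$ norms on $(0,|\Omega^\sharp|)$.

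The key step is the observation that \eqref{1-5} says precisely that the primitive $U(a,t) = \int_0^a u^*(\theta\sigma,t)\,d\sigma$ is dominated by $V(a,t) = \int_0^a v^*(\sigma,t)\,d\sigma$ for every $a$, while both integrands are nonincreasing in $\sigma$ (here one uses that $u^* (\theta\cdot,t)$ inherits monotonicity from $u^*$, and that $u^*\ge 0$, $v^*\ge 0$). The standard Hardy–Littlewood–type lemma then applies: if $\phi,\psi\ge 0$ are nonincreasing on $(0,L)$ with $\int_0^a\phi\le\int_0^a\psi$ for all $a\in(0,L]$, then $\int_0^L\phi^p\le\int_0^L\psi^p$ for every $p\in[1,\infty]$, and more generally $\int_0^L F(\phi)\le\int_0^L F(\psi)$ for any convex nondecreasing $F$ with $F(0)=0$. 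I would either cite this (it is classical, e.g. in Bandle's or Kesavan's book, both already in the reference list) or include the short proof: write $\phi^p = p\int_0^\phi r^{p-1}\,dr$, use Fubini to get $\int_0^L\phi^p\,d\sigma = p\int_0^\infty r^{p-1}|\{\phi>r\}|\,dr$, and note that for nonincreasing $\phi$ the level set $\{\phi>r\}$ is an interval $(0,a_r)$ with $a_r\phi\ge\int_0^{a_r}\phi$... actually the cleanest route is integration by parts: for the truncation argument, approximate $F(x)=x^p$ and use $\int_0^L F(\phi) = -\int_0^L F'(\phi)\phi'\,\sigma\,d\sigma + [\,\cdot\,]$; but since $F'$ is nondecreasing and $\phi$ nonincreasing the measure $-dF'(\phi)$ is nonnegative, so $\int_0^L F(\phi) = \int_{(0,L)} \Big(\int_0^a \phi\Big)\,d(-F'(\phi(a))) + F'(\phi(L))\int_0^L\phi$, and replacing $\int_0^a\phi$ by the larger $\int_0^a\psi$ (both coefficients being nonnegative, and using $\phi(L)\le$ comparison at the endpoint) gives the result after reversing the steps.

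The case $p=\infty$ is handled separately and is easier: $\|u^*(\theta\cdot,t)\|_{L^\infty(0,|\Omega^\sharp|)} = u^*(0^+,t) = \lim_{a\to0^+}\frac1a\int_0^a u^*(\theta\sigma,t)\,d\sigma = \lim_{a\to0^+}\frac{U(a,t)}{a} \le \lim_{a\to0^+}\frac{V(a,t)}{a} = v^*(0^+,t) = \|v\|_{L^\infty}$, using that $u^*,v^*$ are right-continuous nonincreasing and \eqref{1-5}. The main obstacle, such as it is, is purely bookkeeping: making sure the change of variables $s=\theta\sigma$ is applied consistently so that the factor $1/\theta$ lands in the right place, and justifying the Hardy–Littlewood lemma at the level of generality ($p\in[1,\infty]$, merely measurable nonincreasing rearrangements) — there are no curvature or PDE inputs left, everything geometric having already been absorbed into Theorem \ref{thm}. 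I would present this as a two- or three-line deduction citing the rearrangement lemma, with the $p=\infty$ limiting case noted explicitly.
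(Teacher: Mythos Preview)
Your approach is correct and coincides with the paper's: both reduce via equimeasurability and the substitution $s=\theta\sigma$ to comparing $\int_0^{|\Omega^\sharp|}(u^*(\theta\sigma,t))^p\,d\sigma$ with $\int_0^{|\Omega^\sharp|}(v^*(\sigma,t))^p\,d\sigma$, then use the convexity inequality $x^p-y^p\le p\,x^{p-1}(x-y)$ followed by integration by parts against the primitive inequality $U\le V$ from Theorem~\ref{thm}. The paper carries out this integration by parts explicitly for $F(x)=x^p$ rather than invoking the abstract Hardy--Littlewood--P\'olya lemma you cite; note that your displayed identity for $\int_0^L F(\phi)$ actually evaluates to $\int_0^L F'(\phi)\phi$, so when you write it up apply the IBP to $\int F'(\phi)(\phi-\psi)$ (after the convexity step) rather than to $\int F(\phi)$ directly.
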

We mention finally that Talenti's comparison for Poisson  equation with Robin boundary has  recently been confirmed by  Alvino, Nitsch, and Trombetti in dimension two, as well as an integral estimate for higher dimensions \cite{ANT21}. It is a natural question to extend Bandle's comparison to heat equation with Robin boundary, and  we leave this to a future study.

This  paper  is  organized  as  follows.   In  Section  2 we  recall the Schwarz rearrangements and isoperimetric inequalities on manifolds.  In Section 3 we prove Theorem 1.1.

\section{Preliminaries}
 Let $\Omega$ be a bounded smooth domain in $M$ and $\Omega^\sharp$ be a geodesic ball in $M_\k$ (i.e. $\R^n$ or $\mathbb {S}^n$) with volume
$|\Omega|/\th$, where $\th$ is a constant defined by \eqref{1-2}.
We recall the definitions and properties of the Schwarz rearrangement of nonnegative functions on manifolds, see also  \cite[Section 2]{CL21} and \cite[Section 2]{CLM18}.
\begin{definition}\label{sch}
Let $h(x)$ be a nonnegative measurable function on $\Omega$. Denote by
$\Omega_{h,s}=\{x\in \Omega: h(x)>s\}$
and
$
\mu_h(s)=|\Omega_{h,s}|
$,
the decreasing rearrangement $h^*$ of $h$ is defined by
\begin{align}\label{def-dr}
    h^*(s)=\begin{cases}
    \operatorname{ess} \sup_{x\in \Omega} \, h(x), & \quad s=0,\\
    \inf\{t\ge 0: \mu_h(t)<s\}, & \quad s>0,
    \end{cases}
\end{align}
for $s\in [0, |\Omega|]$.  The Schwarz rearrangement of $h$ is defined by
\begin{align}\label{def-sr}
    h^\sharp(x)=h^*(\th \omega_n r^n(x)), \quad x\in \Omega^\sharp,
\end{align}
where $r(x)$ is the distance function from the center of $\Omega^\sharp$ in $M_\k$, and $\omega_n$ is the volume of unit ball in $\R^n$.
\end{definition}
It follows directly from \eqref{def-dr} and \eqref{def-sr} that
\begin{align}
    \mu_h(s)=\th \mu_{h^\sharp}(s)
\end{align}
for $s\ge 0$. Meanwhile, the Fubini's theorem gives
\begin{align}
    \int_\Omega h^p(x)\, dx=\int_0^{|\Omega|} (h^*)^p(s)\, ds=\th \int_{\Omega^\sharp} (h^\sharp)^p(x)\, dx,
\end{align}
for $h\in L^p(\Omega)$, $p\ge 1$. Moreover for any nonnegative functions $f(x)$ and $g(x)$,  the following inequality, known as  Hardy-Littlewood inequality, holds true
\begin{align}\label{2-4}
    \int_\Omega f(x) g(x)\,dx\le \int_0^{|\Omega|} f^*(s) g^*(s) \, ds.
\end{align}
Taking $g(x)$  as the characteristic function of $\Omega_{h,s}$ in above inequality yields
\begin{align}\label{2-6}
\int_{\Omega_{h,s}} f(x)\,dx\le \int_0^{\mu_h(s)} f^*(\eta)\,d\eta.
\end{align}

To prove Theorem \ref{thm}, we require the following isoperimetric inequality on manifolds with Ricci curvature bounded from below.
 \begin{theorem}
With $M$, $\th$, $\Omega$ and $\Omega^\sharp$ as above, there holds
\begin{align}\label{iso}
    |\p \Omega|\ge \theta  |\p \Omega^\sharp|,
\end{align}
where $|\p \Omega|$ denotes the $(n-1)$-dimensional area of $\p \Omega$. The equality holds if only if $\Omega$ is isometric to $\Omega^\sharp$.
 \end{theorem}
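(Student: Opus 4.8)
The strategy is to derive \eqref{iso} directly from the known sharp isoperimetric inequalities under a lower Ricci bound, treating separately the two regimes $\kappa>0$ and $\kappa=0$ that enter the definition \eqref{1-2} of $\theta$. In both cases the only substantive point is to verify that the normalization $\theta\,|\Omega^\sharp|=|\Omega|$ makes $\Omega^\sharp$ coincide with the extremal model domain; once that is done, \eqref{iso} is pure bookkeeping.

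First suppose $\kappa>0$, so that $M$ is compact with $\Ric\ge(n-1)\kappa$. The L\'evy--Gromov isoperimetric inequality (see \cite{CLM18} and the references therein) asserts that if $B\subset M_\kappa$ is a geodesic ball with $|B|/|M_\kappa|=|\Omega|/|M|$, then $|\p\Omega|/|M|\ge|\p B|/|M_\kappa|$. Since $\theta=|M|/|M_\kappa|$ and $\theta\,|\Omega^\sharp|=|\Omega|$, we have $|\Omega^\sharp|/|M_\kappa|=|\Omega|/|M|$, so $B=\Omega^\sharp$; multiplying through by $|M|$ yields $|\p\Omega|\ge\theta\,|\p\Omega^\sharp|$.

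Next suppose $\kappa=0$, so that $M$ is complete, noncompact, with $\Ric\ge0$ and asymptotic volume ratio $\theta\in(0,1]$. Here I would invoke the sharp isoperimetric inequality involving the asymptotic volume ratio, due to Brendle and to Agostiniani--Fogagnolo--Mazzieri (see also \cite{CL21}), namely $|\p\Omega|\ge n\,\omega_n^{1/n}\,\theta^{1/n}\,|\Omega|^{(n-1)/n}$. Since $\Omega^\sharp\subset\R^n$ is a Euclidean ball of volume $|\Omega|/\theta$, the classical identity gives $|\p\Omega^\sharp|=n\,\omega_n^{1/n}\,(|\Omega|/\theta)^{(n-1)/n}$, and therefore $\theta\,|\p\Omega^\sharp|=n\,\omega_n^{1/n}\,\theta^{1/n}\,|\Omega|^{(n-1)/n}\le|\p\Omega|$, which is again \eqref{iso}.

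For the equality statement I would read off the rigidity parts of these same theorems. If $\kappa>0$ and $|\p\Omega|=\theta\,|\p\Omega^\sharp|$, then equality holds in L\'evy--Gromov, which forces $M$ to be isometric to the round model $M_\kappa$ and $\Omega$ to be a geodesic ball of the prescribed volume, hence isometric to $\Omega^\sharp$. If $\kappa=0$ and equality holds, then equality holds in the sharp isoperimetric inequality with asymptotic volume ratio, which forces $\Omega$ to be isometric to a Euclidean ball, hence to $\Omega^\sharp$ (while $M\setminus\Omega$ becomes a metric cone over $\p\Omega$). I expect the inequality itself to be essentially immediate once the correct comparison result is quoted and the model domain is identified; the delicate point is the equality case, since the rigidity of the L\'evy--Gromov inequality lies considerably deeper than the inequality and its full resolution relies on localization and optimal-transport methods.
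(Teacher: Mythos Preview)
Your proposal is correct and matches the paper's approach: the paper does not prove this theorem at all but simply attributes it to L\'evy--Gromov (for $\kappa>0$) and to Agostiniani--Fogagnolo--Mazzieri and Brendle (for $\kappa=0$), exactly as you do. Your write-up in fact supplies more detail than the paper, explicitly verifying that the normalization $\theta|\Omega^\sharp|=|\Omega|$ identifies $\Omega^\sharp$ with the model extremal domain in each case.
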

When $\Ric\ge n-1$, inequality \eqref{iso} was shown by L\'evy and Gromov \cite{Gr99},
see also Theorem 2.1 of \cite{NW16}. When $M$ is noncompact and $\Ric\ge 0$,  inequality \eqref{iso} was shown in dimension three by Agostiniani, Fogagnolo and Mazzieri \cite{AFM20} and in all dimensions by Brendle \cite{Bre21}. Meanwhile, Brendle proved in \cite{Bre21} that \eqref{iso} also holds true when $\Omega$ is a compact minimal submanifold of $M$ of dimension $n+2$ with nonnegative sectional curvature.

\section{Proof of Theorem \ref{thm}}
In this section, we will prove the main theorem.
For simplicity, we write $\Omega_{u}(a,t)$ and $\Omega_{u}  ^{\sharp } (a,t)$ as $\Omega_{a,t}$ and $\Omega^\sharp_{a,t}$ respectively for short.
Let
\begin{align*}
  \Phi(s)=|\p B_s|
 \end{align*}
for $s\in (0,|\Omega^\sharp|]$, where $B_s$ is a round geodesic ball in $M_\k$ with volume $s$. It can be easily checked that $\Phi(s)=n\omega_{n}^{1/n}s^{(n-1)/n}$ if $\k=0$, and $\Phi(s)$ is monotone increasing in $s$.

\begin{lemma}
Under the hypotheses of  Theorem \ref{thm}, we have
\begin{align}\label{3-1}
 \th^2\Phi^2(\frac{\mu_u(s,t)}{\th})\le -\p_s \mu_u(s,t)
\int_{ \Omega_{s,t}}f(x)-u_t(x,t) \, dx,
  \end{align}
and
\begin{align}\label{3-2}
\Phi^2(\mu_v(s,t))= -\p_s \mu_v(s,t)
\int_{ \Omega^\sharp_{s,t}}f^\sharp(x)-v_t(x,t) \, dx,
\end{align}
 for a.e. $t>0$.
\end{lemma}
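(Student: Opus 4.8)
The plan is to derive both identities from the coarea formula combined with the isoperimetric inequality \eqref{iso}, applied to the level sets of $u(\cdot,t)$ and $v(\cdot,t)$ for fixed time $t$. First I would differentiate the equation \eqref{1-3} in the distributional sense over the super-level set $\Omega_{s,t}=\{x\in\Omega:u(x,t)>s\}$. Integrating $u_t-\Delta u=f$ over $\Omega_{s,t}$ and using the divergence theorem gives
\begin{align*}
\int_{\Omega_{s,t}}\bigl(f(x)-u_t(x,t)\bigr)\,dx = -\int_{\Omega_{s,t}}\Delta u\,dx = \int_{\{u=s\}}|\nabla u|\,d\mathcal H^{n-1},
\end{align*}
where the boundary integral is over the (a.e.\ regular by Sard) level set $\{u(\cdot,t)=s\}$; the sign is fixed because $u$ decreases across $\partial\Omega_{s,t}$ so the outward normal is $-\nabla u/|\nabla u|$. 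This is the standard Bandle/Talenti identity and requires only that $u$ be smooth enough and that almost every $s$ be a regular value.

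Next I would bring in the coarea formula in the two complementary ways it is used in this circle of ideas. On one hand, $-\partial_s\mu_u(s,t)=\int_{\{u=s\}}|\nabla u|^{-1}\,d\mathcal H^{n-1}$ for a.e.\ $s$. On the other hand, Cauchy--Schwarz gives
\begin{align*}
|\p\Omega_{s,t}|^2 = \Bigl(\int_{\{u=s\}}1\,d\mathcal H^{n-1}\Bigr)^2 \le \Bigl(\int_{\{u=s\}}|\nabla u|\,d\mathcal H^{n-1}\Bigr)\Bigl(\int_{\{u=s\}}|\nabla u|^{-1}\,d\mathcal H^{n-1}\Bigr).
\end{align*}
Combining this with the boundary identity above yields $|\p\Omega_{s,t}|^2 \le \bigl(-\partial_s\mu_u(s,t)\bigr)\int_{\Omega_{s,t}}(f-u_t)\,dx$. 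Now I invoke the isoperimetric inequality \eqref{iso}: since $\mu_u(s,t)=|\Omega_{s,t}|$ and the model ball of the same volume has perimeter $\Phi(|\Omega_{s,t}|/\theta)\cdot\theta = \theta\,\Phi(\mu_u(s,t)/\theta)$ — more precisely, $\Omega_{s,t}$ plays the role of $\Omega$ in \eqref{iso} with comparison ball of volume $\mu_u(s,t)/\theta$, so $|\p\Omega_{s,t}|\ge\theta\,\Phi(\mu_u(s,t)/\theta)$ — squaring gives exactly \eqref{3-1}. For \eqref{3-2} the same computation on $\Omega^\sharp$ applies, except that $v$ is radially symmetric (being the solution of the Schwarz-symmetrized problem \eqref{1-4} with radial data $f^\sharp,g^\sharp$), so its level sets are geodesic spheres, $|\nabla v|$ is constant on each, and the Cauchy--Schwarz step is an equality; moreover the comparison domain is the ball itself, so the isoperimetric inequality is an equality. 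Hence \eqref{3-2} holds with equality and without the factor $\theta$ adjustments, since $\mu_v(s,t)=|\Omega^\sharp_{s,t}|$ directly.

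The main subtlety — rather than a deep obstacle — is justifying the a.e.\ differentiability of $\mu_u(\cdot,t)$ and the boundary identity at noncritical levels, i.e.\ handling the set of critical values and the possible non-smoothness of level sets; this is dealt with by the usual Sard-type argument (almost every $s$ is a regular value of $u(\cdot,t)$, $\mu_u$ is monotone hence differentiable a.e., and the coarea formula holds in the measure-theoretic sense regardless). A second point worth spelling out is the claim that $v(\cdot,t)$ is genuinely radial for every $t$: this follows from uniqueness for \eqref{1-4} together with the invariance of the problem under the isometry group fixing the center of $\Omega^\sharp$, since $f^\sharp$ and $g^\sharp$ are radial by construction. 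Once radial symmetry of $v$ is in hand, every inequality in the derivation of \eqref{3-2} is saturated, which is precisely why one gets an identity there and merely an inequality in \eqref{3-1}; this asymmetry is the engine that will drive the comparison $U\le V$ in the proof of Theorem \ref{thm}.
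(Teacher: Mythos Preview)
Your proposal is correct and follows essentially the same approach as the paper: integrate the equation over the super-level set via the divergence theorem, apply the coarea formula for $-\partial_s\mu_u$, use Cauchy--Schwarz on the level set to relate perimeter squared to the product of $\int|\nabla u|$ and $\int|\nabla u|^{-1}$, and then invoke the isoperimetric inequality \eqref{iso}; for $v$ radial symmetry saturates every step. Your additional remarks on Sard-type regularity and on why $v$ is radial (via uniqueness and the symmetry of the data) are slightly more explicit than the paper's, but the argument is otherwise identical.
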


\begin{proof}
By the Sard's theorem, for each $t>0$ we have
\begin{align}\label{1-3.3}
    \p \Omega_{s,t}=\{x\in \Omega: u(x,t)=s\}.
\end{align}
for almost every $s\ge 0$.
Observing from isoperimetric inequality \eqref{iso} that
\begin{align}\label{1-3.4}
   \theta \Phi (\frac{|\Omega_{s,t}|}{\th})\le&|\p \Omega_{s,t}|, \end{align}
and from the H\"older inequality that
 \begin{align}\label{1-3.5}
|\p \Omega_{s,t}|^2\le \int_{\p \Omega_{s,t}}\frac{1}{|\nabla u|} \, dA
\int_{\p \Omega_{s,t}}|\nabla u| \, dA,
\end{align}
then we get
\begin{align}\label{1-3.6}
  \theta^2\Phi^2(\frac{|\Omega_{s,t}|}{\th})\le \int_{\p \Omega_{s,t}}\frac{1}{|\nabla u|} \, dA
\int_{\p \Omega_{s,t}}|\nabla u| \, dA,
\end{align}
here and thereafter $dA$ denotes the induced measure on $(n-1)$ dimensional surface in $\Omega$.
Because $u(x,t)$ is vanishing on $\p \Omega$, so $\Omega_{s,t}$ is a compact set in $\Omega$, therefore by the coarea formula we have
\begin{align*}
    \p_s \mu_u(s,t)=-\int_{\p \Omega_{s,t}}\frac{1}{|\nabla u|} \, dA
\end{align*}
then \eqref{1-3.6} becomes to
\begin{align}\label{1-3.7}
  \theta^2\Phi^2(\frac{|\Omega_{s,t}|}{\th})\le \p_s \mu_u(s,t)
\int_{\p \Omega_{s,t}}|\nabla u| \, dA.  \end{align}
For $t>0$, noticing from Definition 2.1 and equality \eqref{1-3.3} that
$$
|\nabla u|=-\frac{\p u}{\p \nu}
$$
on $\p \Omega_{s,t}$  for a.e. $s>0$, where $\nu$ is the unit outer normal to $\p \Omega_{s,t}$, we then get
\begin{align*}
\int_{\p \Omega_{s,t}}|\nabla u| \, dA=\int_{ \Omega_{s,t}}-\Delta u(x) \, dx=\int_{ \Omega_{s,t}}f(x)-u_t(x,t) \, dx.
\end{align*}
So inequality \eqref{3-1} follows from the above equality and inequality \eqref{1-3.7}.

If  $v(x,t)$ is the solution to system \eqref{1-4}, $v(x,t)$ is  radial  and decreasing along the radial direction on $\Omega^\sharp$ for all $t$, hence $\Omega^\sharp_{s, t}$ is a round ball. Therefore both \eqref{1-3.4} and \eqref{1-3.5} hold as equalities  with $\th=1$ if we replace $u$ by $v$, hence equality \eqref{3-2} holds true.
\end{proof}
Now we turn to prove the main theorem.
\begin{proof}[Proof of Theorem \ref{thm}]
Recall
$$U(a,t)=\frac{1}{\theta }\int_{0}^{\theta a}u^*(s,t)ds$$
for $a\in[0, |\Omega^\sharp|)$, then the first derivative  of
 $a$ gives
$$
U'(a,t)=u^*(\th a, t),
$$
and from Lemma 1.1 of \cite{Ba76} it holds
\begin{align}\label{3-14}
U_t(a,t)=\frac{\p}{\p t}\Big( \frac{1}{\th}\int_{\Omega_{u^*,t}} u(x,t)\, dx\Big)= \frac{1}{\th}\int_{\Omega_{u^*,t}} u_t(x,t)\, dx,
\end{align}
where $u^*$ is evaluated at $(\th a, t)$ and $U'=\p_a U$.
Observing that
$$
\mu_u(u^*(\th a, t))=\th a,
$$
for a.e. $a>0$ (c.f. \cite[page 66]{MR86}), and differentiating above identity in $a$  yields
$$
\mu_u'(u^*(\th a, t))U''(a,t)=\th,
$$
then
\begin{align*}
 U''(a,t)=\frac{\th}{\mu_u'(u^*)}
    \ge -\frac{\int_{ \Omega_{u^*,t}}f(x)-u_t(x,t) \, dx}{\th\Phi^2(a) },
\end{align*}
where we used \eqref{3-1} in the inequality.
Recall from  \eqref{2-6} that
\begin{align}\label{3-15}
\int_{ \Omega_{u^*,t}}f(x)\, dx \le \int_0^{a\th} f^*(s)\,ds=\th \int_0^{a} (f^\sharp)^*(s)\,ds,
\end{align}
 then we deduce that
\begin{align}\label{3-16}
   U_t(a,t)-\Phi^2(a) U''(a,t)-\int_0^{a} (f^\sharp)^*(s)\,ds\le 0
\end{align}
for $(a,t)\in(0,|\Omega^\sharp|)\times(0, \infty)$ in the viscosity sense, where we used equality \eqref{3-14}.

Note that  $v(x,t)$ is radial function decreasing along the radial direction, and $f^\sharp$ is radial as well, then the inequality \eqref{3-15} holds as equality if we replace $u$ by $v$, $\Omega$ by $\Omega^\sharp$ and $f(x)$ by $f^\sharp$ according to equality case of Hardy-Littlewood inequality, i.e.
\begin{align}\label{3-17}
\int_{ \Omega^\sharp_{v^*,t}}f(x)\, dx =\int_0^{a} (f^\sharp)^*(s)\,ds.
\end{align}
Combining equality \eqref{3-2}  and  equality \eqref{3-17}, we find similarly as \eqref{3-16} that
\begin{align}\label{3-18}
   V_t(a,t)-\Phi^2(a) V''(a,t)-\int_0^{a} (f^\sharp)^*(s)\,ds= 0,
\end{align}
for $(a,t)\in(0, |\Omega^\sharp|)\times(0,\infty)$. It then follows from inequality \eqref{3-16} and equality \eqref{3-18} that
\begin{align}\label{3.13}
    (U-V)_t(a,t)-\Phi^2(a) (U-V)''(a, t)\le 0.
\end{align}
 On the other hand, by the initial condition of $u$ and $v$ we have
\begin{align}\label{3.14}
\begin{split}
    U(a, 0)-V(a,0)=& \frac{1}{\theta }\int_{0}^{\theta a}u^*(s,0)ds-\int_{0}^{a}v^*(s,0)ds\\
=&\frac{1}{\theta }\int_{0}^{\theta a}g^*(s)ds-\int_{0}^{a}(g^\sharp)^*(s)\, ds\\
=&0.
\end{split}
\end{align}
Moreover, by direct calculations we have
\begin{align}\label{3.15}
    U(0,t)=V(0,t)=0,
\end{align}
and
\begin{align}\label{3.16}
    U'(|\Omega^\sharp|,t)- V'(|\Omega^\sharp|,t)=u^*(|\Omega|,t)-v^*(|\Omega^\sharp|,t)=0.
\end{align}
We then conclude that differential inequality \eqref{3.13} holds with initial condition \eqref{3.14} and boundary conditions \eqref{3.15} and \eqref{3.16}.
Therefore \eqref{1-5} follows by the standard maximum principle for viscosity solutions to parabolic equation, see \cite[Theorem 3.3]{CIL92} and \cite[Section 2]{LW17}.

Now we consider the equality case of \eqref{1-5}.  If $U(a,t)=V(a,t)$, then  $ u^*(\theta a,t) = v^*(a ,t)$, hence $\mu_u(u^*,t)=\theta \mu_v(v^*,t)$
for all $a>0$. Then we estimate that
\begin{align}\label{3.17}
\begin{split}
|\p \Omega_{u^*,t}|^{2}&\leq-\mu_{u}'(u^*,t) \big(\int_{\Omega_{u^*,t}} f
 (x,t)-u_t(x,t)\ dx\big) \\
& \leq-\mu_{u}'(u^*,t) \big(\int_0^{\th a}f^*(s,t)\ ds-
\th U_t(a,t)\big)\\
&= -\theta^2\mu_{v}'(v^*,t) \big(\int_0^{a}(f^\sharp)^*(s,t)\ ds-
 V_t(a,t)\big)\\
&=\th^2|\p \Omega^\sharp_{v^*,t}|^{2}.
\end{split}
\end{align}
where in the first step we used inequality \eqref{3-1}, in the second inequality we used \eqref{3-14} and \eqref{3-15}.
On the other hand, the isoperimetric inequality \eqref{iso} gives
\begin{align*}
|\p \Omega_{u^*,t}|^{2}\ge \th^2|\p \Omega^\sharp_{v^*,t}|^{2},
\end{align*}
so inequality \eqref{3.17} holds as equality. Thus we conclude from the equality case in \eqref{iso} that $\Omega_{u^*,t}$ is a geodesic ball in $M_\k$, $\th=1$ and $u(x,t)=v(x,t)$ on $\Omega^\sharp \times [0,\infty)$, hence $f(x)=f^\sharp(x)$ and $g(x)=g^\sharp(x)$.
\end{proof}

\begin{proof}[Proof of Corollary \ref{Cor}] The proof of this lemma  is in the same spirit as in  Theorem 3.2 of \cite{Ba76}. For the reader's convenience, we give the details. For $p\ge 1$,  the Fubini's theorem gives
\begin{align*}
    \frac 1 \th \int_\Omega u^p(x,t)\, dx=\frac 1 \th \int_0^{|\Omega|} (u^*)^p(s, t) ds=\int_0^{|\Omega^\sharp|} (u^*)^p(\th s, t) \,ds
\end{align*}
and
\begin{align*}
   \int_{\Omega^\sharp} v^p(x,t)\, dx=\int_0^{|\Omega^\sharp|} (v^*)^p(s, t)\, ds.
\end{align*}
Then we estimate by integration by parts that
\begin{align}\label{3.18}
\begin{split}
  & \frac 1 \th \int_\Omega u^p(x,t)\, dx-\int_{\Omega^\sharp} v^p(x,t)\, dx\\
   =\quad &\int_0^{|\Omega^\sharp|} (u^*)^p(\th s, t)-(v^*)^p(s, t) \,ds\\
   \le \quad &\int_0^{|\Omega^\sharp|} p(u^*)^{p-1}(u^*-v^*)\,ds\\
   =\quad&(U-V)p(u^*)^{p-1}\Big|_0^{|\Omega^\sharp|}-\int_0^{|\Omega^\sharp|}(U-V)p(p-1)(u^*)^{p-2}\p_s u^*\,ds,
   \end{split}
\end{align}
where in the inequality we used the elementary inequality $
x^p-y^p\le px^{p-1}(x-y)
$
for $x, y\in \R$ when $p\ge 1$. By  \eqref{3.15}, \eqref{1-5} and  $\p_s u^*\le 0$, we see
\begin{align*}
    (U-V)p(u^*)^{p-1}\Big|_0^{|\Omega^\sharp|}=(U-V)p(u^*)^{p-1}\Big|_{s=|\Omega^\sharp|}\le 0,
\end{align*}
and
\begin{align*}
    (U-V)p(p-1)(u^*)^{p-2}\p_s u^*\ge 0,
\end{align*}
therefore it follows from \eqref{3.18} that
\begin{align*}
 \frac 1 \th \int_\Omega u^p(x,t)\, dx-\int_{\Omega^\sharp} v^p(x,t)\, dx\le 0
\end{align*}
for all $p\ge 1$, hence for $p=\infty$. We complete the proof of the corollary.
\end{proof}

In Euclidean space, the argument in the proof of \eqref{1-5}  can be adapted to prove  Bandle's comparison  for  the heat kernels.  This is outlined in Theorem 2.2 of \cite{Ba76}.  It’s not hard to adapt our argument  to prove Bandle's comparison  for  the heat kernels  on compact manifolds with positive Ricci curvature, and complete and noncompact manifolds with nonnegative Ricci curvature and positive asymptotic volume ratio. Hence one can easily obtain the Faber-Krahn inequality for the first Dirichlet eigenvalue on such manifolds via heat kernel comparison, which has  already been proved by Mattia-Lorenzo \cite{ML20} and Chen-Li \cite{CL21} respectively.

\end{document}